\topmargin -1.5cm \textwidth 6in \textheight 8.5in
\documentclass[11pt]{amsart}
\usepackage{}
\usepackage{amssymb}

\usepackage{color}


\theoremstyle{plain}
\newtheorem{Thm}{Theorem}

\newtheorem{Pro}[Thm]{Proposition}

\errorcontextlines=0

\begin{document}

\title[scalar curvature of Yamabe solitons]
{Remarks on scalar curvature of Yamabe solitons}

\author{Li Ma, Vicente Miquel}

\address{Ma: Department of mathematics \\
Henan Normal university \\
Xinxiang, 453007 \\
China}

\email{lma@tsinghua.edu.cn}

\address{Vicente Miquel: Department of
Geometry and Topology, \\
The university  of Valencia\\
Av. Vicent Andr\'es EsteII\'es \\
 1 46100, Burjassot,
Valencia\\
Spain \\
}

\email{miquel@uv.es}

\thanks{The research is partially supported by the National Natural Science
Foundation of China 10631020 and SRFDP 20090002110019, the projects
DGI (Spain) and FEDER  project MTM2010-15444 and the Generalitat
Valenciana project GVPrometeo 2009/099}

\begin{abstract}
In this paper, we consider the scalar curvature of Yamabe solitons.
In particular we show that, with natural conditions and non positive Ricci curvature, any complete
Yamabe soliton  has constant scalar curvature,
namely, it is a Yamabe metric. We also show that the quadratic decay
at infinity of the Ricci curvature of a complete non-compact Yamabe
soliton has  non-negative scalar curvature. A new proof of
Kazdan-Warner condition is also presented.

{ \textbf{Mathematics Subject Classification 2000}: 35Jxx, 53Qxx}

{ \textbf{Keywords}: Yamabe solitons, constant scalar curvature metric}
\end{abstract}

 \maketitle

\section{Introduction}
In this work, we study the special solutions, the so called the
Yamabe solitons, to the Yamabe flow, which was introduced by
R.Hamilton at the same time as Ricci flow. We note that the Yamabe
flow has some similar properties as Ricci flow
(\cite{MC},\cite{Y94}, \cite{SS}\cite{H88},\cite{DM}). Since the
Yamabe solitons come naturally from the blow-up procedure along the
Yamabe flow \cite{AM}\cite{DM}\cite{chow}\cite{MC}, we are lead to
study the Yamabe solitons on complete non-compact Riemannian
manifolds. We shall study some properties of the scalar curvature of
the Yamabe solitons on complete non-compact Riemannian manifolds.
Recall that a Riemannian manifold $(M,g)$ is called a Yamabe soliton
if there are a smooth vector filed $X$ and constant $\rho$ such that
\begin{equation}\label{soliton}
(R-\rho)g=\frac{1}{2}L_Xg \ \ \text{on}  \ \ M,
\end{equation}
where $R$ is the scalar curvature and $L_Xg$ is the Lie derivative
of the metric $g$; When $X=\nabla f$ for some smooth function $f$,
we call it the gradient Yamabe soliton.The function $f$ above
will be called the potential function and it is determined up to a
constant. In this case the equation \eqref{soliton} becomes
\begin{equation}\label{solitong}
(R-\rho)g=\nabla^2 f \ \ \text{on}  \ \ M,
\end{equation}
When the constant $\rho\geq 0$, we call the Yamabe
solitons he non-expanding Yamabe solitons.

 In this paper we shall prove the following results on
the sign of the scalar curvature $R$ of a Yamabe soliton depending
on some asymptotic behaviour of it.

\begin{Thm}\label{max}
Let $(M,g)$ be a complete and non-compact gradient Yamabe soliton with
$\rho\geq 0$. Assume that $\underline{\lim}_{x\to\infty} R(x)\geq
0$. Then the scalar curvature $R$ of $(M,g)$ is non-negative.
Furthermore, if $(M,g)$ is not scalar flat, then $R>0$ on $M$.
\end{Thm}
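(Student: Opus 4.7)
My plan is to reduce the theorem to a maximum/minimum principle argument for an autonomous elliptic PDE satisfied by $R$ alone. The first step is to derive this PDE. From the soliton equation $\nabla_i\nabla_j f=(R-\rho)g_{ij}$, tracing gives $\Delta f=n(R-\rho)$. Taking a divergence of the soliton equation and applying the Ricci commutator identity $\nabla^j\nabla_i\nabla_j f=\nabla_i\Delta f+R_{ij}\nabla^j f$ produces the first-order relation $(n-1)\nabla_i R=-R_{ij}\nabla^j f$. A second divergence, combined with the contracted second Bianchi identity $\nabla^i R_{ij}=\tfrac12\nabla_j R$ and the identity $R_{ij}\nabla^i\nabla^j f=R(R-\rho)$ (obtained by reinserting the soliton equation), yields
\[(n-1)\Delta R+\tfrac{1}{2}\langle\nabla f,\nabla R\rangle+R(R-\rho)=0. \qquad(\star)\]

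For the nonnegativity of $R$, I argue by contradiction. Suppose $R(x_1)<0$ for some $x_1\in M$. The hypothesis $\underline{\lim}_{x\to\infty}R(x)\geq 0$ supplies a compact set $K\subset M$ outside of which $R>R(x_1)/2$; hence $\inf_M R=\inf_K R$ is attained at some interior point $x_0$, with $R(x_0)<0$. There $\nabla R(x_0)=0$ and $\Delta R(x_0)\geq 0$. Substituting into $(\star)$ gives $(n-1)\Delta R(x_0)=-R(x_0)(R(x_0)-\rho)$. Since $\rho\geq 0$, the factor $R(x_0)-\rho$ is also strictly negative, making the right-hand side strictly negative while the left-hand side is nonnegative, a contradiction. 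Therefore $R\geq 0$ on all of $M$.

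For the strict positivity, rewrite $(\star)$ as $LR=-R^2\leq 0$, where $Lu:=(n-1)\Delta u+\tfrac{1}{2}\langle\nabla f,\nabla u\rangle-\rho u$ is a linear second-order elliptic operator whose zero-order coefficient $-\rho$ is nonpositive. Since $R\geq 0$, the vanishing of $R$ at some interior point $x_0$ would realize a nonpositive interior minimum, and the strong minimum principle for operators with $c\leq 0$ (equivalently, Hopf's strong maximum principle applied to $-R$) would then force $R\equiv 0$ on the connected manifold $M$, i.e., $(M,g)$ would be scalar flat. Contrapositively, if $(M,g)$ is not scalar flat then $R>0$ throughout $M$.

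The main subtlety is neither the curvature computation giving $(\star)$, which is routine, nor the maximum principle steps, which are classical, but the role of the $\underline{\lim}$ hypothesis: it is exactly what one needs to compensate for the non-compactness of $M$, ensuring that any negative infimum of $R$ is actually attained at a finite point where the ordinary interior maximum principle applies.
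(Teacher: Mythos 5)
Your proposal is correct and follows essentially the same route as the paper: both derive the identity $(n-1)\Delta R+\tfrac12\langle\nabla f,\nabla R\rangle+R^2-\rho R=0$ from the soliton equation via the Ricci commutation formula and the contracted Bianchi identity, then use the $\underline{\lim}$ hypothesis to locate an attained negative minimum and rule it out, and finally invoke the strong maximum principle for the dichotomy $R>0$ or $R\equiv 0$. Your write-up is in fact slightly more explicit than the paper's at the two places it is terse (why the infimum is attained, and which operator the strong minimum principle is applied to), but there is no substantive difference.
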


We shall use the argument from \cite{P02} to get another results
about non-expanding Yamabe solitons.

\begin{Thm}\label{vicente+1}
Let $(M,g)$ be a complete and non-compact gradient Yamabe soliton with
$\rho\geq 0$. Assume that there is some point $x_0$ such that for
some large uniform  constant $R_0>1$,
$$
\int_{\gamma} [R- 2 (n-1) Ric(\gamma',\gamma')]\leq   \rho\ d(x),
$$
for any minimizing geodesic curve $\gamma$ connecting $x_0$ to $x$
with $d(x,x_0)\geq R_0$. Then $R\geq 0$.
\end{Thm}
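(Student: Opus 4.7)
The plan is to reduce the claim to Theorem~\ref{max} by establishing $\liminf_{x \to \infty} R(x) \geq 0$. The hypothesis combines naturally with the gradient soliton equation along minimizing geodesics, in the spirit of \cite{P02}.

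Given $x$ with $d(x, x_0) \geq R_0$, let $\gamma: [0, d] \to M$ be a unit-speed minimizing geodesic from $x_0$ to $x$, where $d = d(x, x_0)$. Restricting \eqref{solitong} to $\gamma$ yields $(f \circ \gamma)''(t) = R(\gamma(t)) - \rho$, whence integration gives
\begin{equation*}
\int_0^d R\, dt \; -\; \rho \, d \; = \; \langle \nabla f, \gamma' \rangle(x) \; - \; \langle \nabla f, \gamma' \rangle(x_0).
\end{equation*}
Substituting this into the hypothesis cancels the $\rho\,d$ terms and produces
\begin{equation*}
\langle \nabla f, \gamma' \rangle(x) \; \leq \; \langle \nabla f, \gamma' \rangle(x_0) \; + \; 2(n-1) \int_0^d Ric(\gamma'(t), \gamma'(t))\, dt.
\end{equation*}

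Next I would invoke the second variation of arc length along the minimizing $\gamma$ with a tent cutoff $\phi : [0, d] \to [0, 1]$ that vanishes at the endpoints and equals $1$ on $[1, d-1]$:
\begin{equation*}
\int_0^d \phi^2 \, Ric(\gamma', \gamma')\, dt \; \leq \; (n - 1) \int_0^d (\phi')^2\, dt \; = \; 2 (n - 1).
\end{equation*}
Since $x_0$ is a fixed point of the smooth manifold, the contribution on $[0, 1]$ is bounded by a constant depending only on $|Ric|$ on a unit ball at $x_0$; after dealing with the contribution on $[d-1, d]$, one extracts a uniform bound $\int_0^d Ric(\gamma', \gamma')\,dt \leq C$ independent of $d$. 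Combined with the previous inequality and Cauchy--Schwarz $|\langle \nabla f, \gamma'\rangle(x_0)| \leq |\nabla f|(x_0)$, this yields $\langle \nabla f, \gamma'\rangle(x) \leq C'$ uniformly in $x$. A second integration of $(f \circ \gamma)'' = R - \rho$ then converts the uniform upper bound on $f'$ at $x$ into a lower bound on the average $\frac{1}{d} \int_0^d R\, dt$ along $\gamma$, forcing $R(x) \geq -o(1)$ as $d \to \infty$, and Theorem~\ref{max} closes the argument.

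The main obstacle is the far-end correction in the second variation: with no a priori Ricci bound near $x$, the naive tent cutoff leaks an unbounded term. One way around this is to feed the soliton identity $\nabla^2 f(\gamma', \gamma') = R - \rho$ back in to re-express the boundary contribution on $[d-1,d]$ in terms of the hypothesis itself, closing the estimate. A secondary subtlety is upgrading the averaged information on $R$ along long geodesics to a pointwise asymptotic lower bound, which should follow from continuity together with the freedom to choose $x$ going to infinity along any geodesic ray emanating from $x_0$.
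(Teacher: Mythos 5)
Your reduction to Theorem \ref{max} does not go through, for two reasons. First, the hypothesis of the theorem is purely integral along geodesics, and the quantity you end up controlling, $\int_0^d (R-\rho)\,dt = \langle\nabla f,\gamma'\rangle(x)-\langle\nabla f,\gamma'\rangle(x_0)$, is an average of $R$ along $\gamma$; no amount of continuity converts bounds on such averages into the pointwise statement $\liminf_{x\to\infty}R(x)\ge 0$ that Theorem \ref{max} requires --- $R$ can dip sharply near the far endpoint of a long geodesic while every geodesic average stays controlled. Second, the direction of your final inequality is wrong: a uniform upper bound on $\langle\nabla f,\gamma'\rangle(x)$ yields an \emph{upper} bound on $\int_0^d(R-\rho)\,dt$, not the lower bound on the average of $R$ that you claim. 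There is also the far-end leak in the second variation estimate that you flag yourself; the soliton identity $\nabla^2f(\gamma',\gamma')=R-\rho$ carries no information about $Ric(\gamma',\gamma')$ on $[d-1,d]$, so feeding it back in cannot close that estimate.

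The paper's actual mechanism is different and is precisely what supplies the missing pointwise control. One first derives the elliptic equation $(n-1)\Delta R+\frac12\langle\nabla f,\nabla R\rangle+R^2-\rho R=0$ (equation \eqref{scalar}); the integral hypothesis is used only to bound the drift quantity $2(n-1)\Delta d+\langle\nabla f,\nabla d\rangle$ via the second variation formula combined with $\nabla^2f(\gamma',\gamma')=R-\rho$ (this is where a computation like your first one does appear, as inequality \eqref{star}). One then runs a localized maximum principle on $u=\phi(d(x)/(Ar_0))R(x)$: at a negative interior minimum $x_1$ the quadratic term $R^2$ in \eqref{scalar} dominates and forces $|u(x_1)|\le C(n)/(A^2r_0^2)$, and letting $A\to\infty$ gives $R\ge 0$ everywhere. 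The pointwise conclusion comes from the PDE satisfied by $R$, not from the geodesic integrals; to salvage your outline you would need to replace the appeal to Theorem \ref{max} by this Perelman-style localized argument.
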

The proof of this theorem will be given in section \ref{sect3}.

We can show that in some cases the Yamabe solitons are Yamabe
metrics.

\begin{Thm}\label{vicente+2}
Let $(M,g)$ be a complete and non-compact gradient Yamabe soliton such that
$|R-\rho|\in L^1(M)$, $\int_M Ric(\nabla f, \nabla f) \le 0$ and the potential function $f$ has at most
quadratic growth on $M$; that is,
$$
|f(x)|\leq Cd(x,x_0)^2, \ \ |\nabla f|\leq C(1+d(x,x_0)^2), \ \
$$
near infinity, where $C$ is some uniform constant
and $d(x,x_0)$ is
the distance function from the point $x$ to a fixed point $x_0$.
Then $R=\rho$ on $(M,g)$.
\end{Thm}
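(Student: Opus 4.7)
The plan is to derive an integral identity forcing $\int_M (R-\rho)^2 \le 0$, and hence $R\equiv\rho$, by combining the soliton structure with an integration by parts. First, taking the trace of \eqref{solitong} gives $\Delta f = n(R-\rho)$. Next, applying $\nabla^j$ to \eqref{solitong} and commuting covariant derivatives (equivalently, using Bochner's formula for the 1-form $df$) yields the pointwise identity
\begin{equation*}
\operatorname{Ric}(\nabla f) = -(n-1)\nabla R,
\end{equation*}
or equivalently $\operatorname{Ric}(\nabla f,\nabla f) = -(n-1)\langle \nabla R,\nabla f\rangle$. This is the Yamabe analogue of the classical Bianchi-type identity satisfied by gradient Ricci solitons.

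Formally integrating by parts and invoking $\Delta f = n(R-\rho)$,
\begin{equation*}
\int_M \langle \nabla R,\nabla f\rangle = \int_M \langle \nabla(R-\rho),\nabla f\rangle = -\int_M (R-\rho)\Delta f = -n\int_M (R-\rho)^2,
\end{equation*}
so that
\begin{equation*}
\int_M \operatorname{Ric}(\nabla f,\nabla f) = n(n-1)\int_M (R-\rho)^2 \ge 0.
\end{equation*}
The hypothesis $\int_M \operatorname{Ric}(\nabla f,\nabla f)\le 0$ then immediately forces $(R-\rho)^2\equiv 0$, i.e., $R\equiv\rho$ on $M$.

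The substantive work lies in justifying the integration by parts on the non-compact manifold. I would introduce standard cutoff functions $\phi_r$ with $\phi_r\equiv 1$ on $B(x_0,r)$, $\operatorname{supp}\phi_r\subset B(x_0,2r)$, and $|\nabla\phi_r|\le C/r$. Multiplying by $\phi_r$ before integrating by parts produces the boundary-type error term $-\int_M (R-\rho)\langle \nabla\phi_r,\nabla f\rangle$. Using $|\nabla f|\le C(1+d^2)$ on the annular support of $\nabla\phi_r$, this error is controlled by $Cr\int_{B(x_0,2r)\setminus B(x_0,r)}|R-\rho|$. The main obstacle is then to exploit the $L^1$ hypothesis on $R-\rho$, together with the quadratic growth of $f$, to extract a sequence $r_k\to\infty$ along which the error vanishes, allowing one to pass to the limit and obtain the displayed identity.
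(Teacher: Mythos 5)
Your proposal follows essentially the same strategy as the paper: both reduce the theorem to the integral identity $\int_M \operatorname{Ric}(\nabla f,\nabla f)=n(n-1)\int_M(R-\rho)^2$ and then invoke the hypothesis $\int_M \operatorname{Ric}(\nabla f,\nabla f)\le 0$ to force $R\equiv\rho$. The difference is in how the identity is produced. The paper proves it as Proposition \ref{mali}, a Reilly-type formula obtained by starting from $\int_M|\Delta f|^2$ and applying the Bochner formula to $f$; this is then applied to the balls $B_r$, yielding the boundary term $(n-1)\int_{\partial B_r}(R-\rho)\nabla_\nu f$. You instead combine the first-order soliton identity $\operatorname{Ric}(\nabla f,\cdot)=-(n-1)\nabla R$ (the paper's own equation \eqref{riccf}, derived in the proof of Theorem 1) with the trace $\Delta f=n(R-\rho)$ of \eqref{solitong} and a single integration by parts against a cutoff. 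Your derivation is shorter and avoids the Bochner formula entirely; the two computations are equivalent and the signs in your version check out. The one step you explicitly leave open --- extracting radii along which the error term vanishes --- is precisely the step the paper closes by Fubini/coarea: since $\int_0^\infty\bigl(\int_{\partial B_s}|R-\rho|\bigr)\,ds\le\int_M|R-\rho|<\infty$, there exist $r_j\to\infty$ with $r_j\int_{\partial B_{r_j}}|R-\rho|\to 0$, and this kills the sphere boundary term when $|\nabla_\nu f|$ grows linearly. One caution: with the hypothesis $|\nabla f|\le C(1+d^2)$ taken literally, your annular error is $\frac{C}{r}\cdot Cr^2\int_{B_{2r}\setminus B_r}|R-\rho|=Cr\int_{B_{2r}\setminus B_r}|R-\rho|$, and the $\liminf$ of this over annuli need not vanish under a bare $L^1$ hypothesis (consider $\int_{\partial B_s}|R-\rho|\sim s^{-2}$); the same weakness is latent in the paper, whose displayed bound $Cr\int_{\partial B_r}|R-\rho|$ tacitly uses the linear gradient bound $|\nabla f|\le C(1+d)$ that one naturally expects from quadratic growth of $f$. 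Under that reading your cutoff error is simply $C\int_{B_{2r}\setminus B_r}|R-\rho|\to 0$, and your argument closes without even passing to a subsequence.
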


We shall also study Liouville type theorem of harmonic functions
with finite Dirichlet integral. We now show the following result.

\begin{Thm}\label{vicente+3}

Let $(M,g)$ be a complete and non-compact Riemmnian manifold with
non-negative Ricci curvature. Assume that $u$ is a harmonic function
with finite weighted Dirichlet integral, i.e., for some ball
$B(x_0)$,
$$
\int_{M-B(x_0)} d(x,x_0)^{-2}|\nabla u|^2<\infty.
$$
Then $\nabla^2u=0$ on $M$.
\end{Thm}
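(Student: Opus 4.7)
The plan is to combine Bochner's formula for the harmonic function $u$ with a weighted cutoff argument that is designed precisely to exploit the hypothesis that $|\nabla u|^2/d(\cdot,x_0)^2$ is integrable near infinity. Since $\Delta u=0$ and $\mathrm{Ric}\ge 0$, Bochner's identity
$$
\tfrac12 \Delta |\nabla u|^2 = |\nabla^2 u|^2 + \mathrm{Ric}(\nabla u,\nabla u)
$$
yields the pointwise inequality $\tfrac12\Delta |\nabla u|^2 \ge |\nabla^2 u|^2$. The goal is then to integrate this against a well-chosen test function and show the right-hand side integrates to zero.

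Next I would take an annular cutoff $\eta_R$, equal to $1$ on $B(x_0,R)$, vanishing outside $B(x_0,2R)$, and interpolating so that $|\nabla \eta_R|\le C/R$ almost everywhere. On the support of $\nabla \eta_R$ one has $d(x,x_0)\ge R$, hence the crucial pointwise bound
$$
|\nabla \eta_R|^2 \le \frac{C^2}{R^2}\le \frac{C^2}{d(x,x_0)^2}.
$$
Multiplying the Bochner inequality by $\eta_R^2$, integrating over $M$ and integrating by parts once gives
$$
\int_M \eta_R^2 |\nabla^2 u|^2 \le -\int_M \eta_R \nabla \eta_R\cdot \nabla |\nabla u|^2.
$$
Using $|\nabla |\nabla u|^2|\le 2|\nabla u||\nabla^2 u|$ together with AM--GM ($2ab\le \tfrac12 a^2 + 2b^2$) lets me absorb half of the Hessian term into the left side and obtain
$$
\tfrac12 \int_M \eta_R^2 |\nabla^2 u|^2 \le C_1 \int_M |\nabla \eta_R|^2 |\nabla u|^2 \le C_2 \int_{B(x_0,2R)\setminus B(x_0,R)} \frac{|\nabla u|^2}{d(\cdot,x_0)^2}.
$$

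The finite weighted Dirichlet hypothesis then implies the annular tail on the right tends to $0$ as $R\to \infty$. Combined with monotone convergence on the left, this forces $\int_M |\nabla^2 u|^2=0$, and hence $\nabla^2 u\equiv 0$, which is the desired conclusion.

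The main technical obstacle is that the distance function $d(\cdot,x_0)$ is only Lipschitz and fails to be smooth along the cut locus, so the cutoff $\eta_R$ inherits the same deficiency. I would handle this either by smoothing $d(\cdot,x_0)$ via the Greene--Wu approximation procedure while preserving the gradient bound $|\nabla \eta_R|\le C/R$, or by approximating $\eta_R$ from above and below by smooth radial functions and passing to the limit; both are standard on complete manifolds with $\mathrm{Ric}\ge 0$ thanks to Laplacian comparison. Aside from this, every step is a routine integration by parts and tail-vanishing argument.
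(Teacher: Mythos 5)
Your proposal is correct and follows essentially the same route as the paper: Bochner's formula for the harmonic function, a cutoff supported on $B_{2R}$ with $|\nabla\eta_R|\le C/R$, and the observation that the weighted Dirichlet hypothesis makes the annular error term vanish as $R\to\infty$. The only (minor, and in fact advantageous) deviation is that you integrate by parts once and absorb the Hessian term via Cauchy--Schwarz, needing only $|\nabla\eta_R|\le C/R$, whereas the paper integrates by parts twice and must also assume $\Delta\phi_r^2\le C/r^2$ --- a second-derivative bound on the cutoff that is delicate at the cut locus and that your version simply avoids.
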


Then, use the idea of the proof of the above result to study the Yamabe solitons and we obtain

\begin{Thm}\label{thm4}
Assume that the Yamabe soliton $(M,g,X)$ has non-positive Ricci
curvature. Suppose that
\begin{equation}\label{hipt5}
\int_{M-B(x_0)} d(x,x_0)^{-2}
|X|^2<\infty.\end{equation}
 Then
$$
\nabla X=0
$$
and $R=\rho$.
\end{Thm}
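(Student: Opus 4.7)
The plan is to mimic the cut-off Bochner argument of Theorem~\ref{vicente+3} at the level of the vector field $X$, using the Yamabe soliton equation in place of harmonicity and the opposite sign of the Ricci hypothesis.

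The first step is a Bochner-type identity for $|X|^{2}$. Taking the divergence of the soliton equation $L_{X}g=2(R-\rho)g$ and using the commutator identity $\nabla^{i}\nabla_{j}X_{i}=\nabla_{j}(\mathrm{div}\,X)+R_{jk}X^{k}$ together with the trace relation $\mathrm{div}(X)=n(R-\rho)$, one obtains $\Delta X_{j}=(2-n)\nabla_{j}R-R_{jk}X^{k}$. Substituting into $\tfrac12\Delta|X|^{2}=|\nabla X|^{2}+\langle X,\Delta X\rangle$ yields
\begin{equation*}
\tfrac12\Delta|X|^{2}=|\nabla X|^{2}-(n-2)\langle X,\nabla R\rangle-\mathrm{Ric}(X,X).
\end{equation*}
Writing $\nabla_{i}X_{j}=(R-\rho)g_{ij}+A_{ij}$ with $A$ antisymmetric, one has $|\nabla X|^{2}=n(R-\rho)^{2}+|A|^{2}$ and $\tfrac12\nabla_{i}|X|^{2}=(R-\rho)X_{i}+A_{ij}X^{j}$.

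Next I would multiply the Bochner identity by $\phi_{r}^{2}$, where $\phi_{r}$ is a radial cut-off with $\phi_{r}\equiv 1$ on $B(x_{0},r)$, $\mathrm{supp}\,\phi_{r}\subset B(x_{0},2r)$, and $|\nabla\phi_{r}|\le C/r$, and integrate over $M$. Two integrations by parts (one on the Laplacian, one on $\int\phi_{r}^{2}\langle X,\nabla R\rangle$, in which the $\rho$-contributions cancel against one another via $\mathrm{div}\,X=n(R-\rho)$), together with Cauchy--Schwarz and AM--GM absorption of the resulting cross terms, should produce an estimate
\begin{equation*}
n(n-1)\int_{M}\phi_{r}^{2}(R-\rho)^{2}+\int_{M}\phi_{r}^{2}|A|^{2}-\int_{M}\phi_{r}^{2}\mathrm{Ric}(X,X)\ \le\ C_{n}\int_{M}|\nabla\phi_{r}|^{2}|X|^{2}.
\end{equation*}
Under the hypothesis $\mathrm{Ric}\le 0$, the three terms on the left are all nonnegative. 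On $\mathrm{supp}(\nabla\phi_{r})\subset B(x_{0},2r)\setminus B(x_{0},r)$ one has $d(x,x_{0})\ge r$, so the right-hand side is bounded by $C'_{n}\int_{B(x_{0},2r)\setminus B(x_{0},r)}d(x,x_{0})^{-2}|X|^{2}$, which tends to $0$ as $r\to\infty$ by the hypothesis \eqref{hipt5}. Letting $r\to\infty$ forces $R\equiv\rho$, $A\equiv 0$, and $\mathrm{Ric}(X,X)\equiv 0$; the first two and the decomposition then yield $\nabla X\equiv 0$.

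The main technical hurdle is the coefficient bookkeeping when combining the two integrations by parts. The term $(n-2)\langle X,\nabla R\rangle$ has the unfavourable sign in the Bochner identity, but after integration by parts it contributes $-n(n-2)\int\phi_{r}^{2}(R-\rho)^{2}$, which combines with the $n(R-\rho)^{2}$ piece of $|\nabla X|^{2}$ to give the positive coefficient $n(n-1)$; a similar care is needed to see the $\rho$-pieces cancel. A secondary point is that in the non-gradient case the antisymmetric part $A$ produces extra cross terms, but since $|A|^{2}$ itself appears with the right sign on the left, a Cauchy--Schwarz bound is enough to close the argument.
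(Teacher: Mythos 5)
Your proposal is correct and follows essentially the same route as the paper: your identity $\tfrac12\Delta|X|^2=|\nabla X|^2-(n-2)\nabla_XR-\mathrm{Ric}(X,X)$ is exactly the paper's equation \eqref{bochner2} (obtained there from the Yano--Bochner formula for $\mathrm{div}(L_Xg)(X)$ together with $\mathrm{div}\,X=n(R-\rho)$), and the subsequent cut-off integration, the integration by parts of $\int\phi^2\nabla_XR$ against $\mathrm{div}\,X=n(R-\rho)$, and the Cauchy--Schwarz absorption of the cross terms all match the paper's argument. Your additional decomposition $\nabla X=(R-\rho)g+A$ with $A$ antisymmetric is a minor refinement that produces the coefficient $n(n-1)$ in front of $\int\phi^2(R-\rho)^2$ instead of the paper's $(n-1)(n-2)$, which handles $n=2$ slightly more transparently, but the approach is the same.
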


Let us remark that Theorem 5 applies to solitons in general, they do not need to be gradient nor non-expanding. When applied to non-expanding solitons, Theorem \ref{thm4} states that the only non-expanding solitons with non positive Ricci curvature and satisfying condition  \eqref{hipt5} are the Ricci-flat and steady ones.

\section{Proofs of theorems 1, 4 and 5}

 \begin{proof} of Theorem \ref{max}.

We shall denote $Ric=(R_{ij})$ the Ricci tensor in
the local coordinates $(x^j)$.

First we shall obtain a formula for the laplacian os the scalar curvature of a gradient Yamabe soliton.
Taking the k-derivative to (\ref{solitong}) we have
$$
\nabla_kf_{ij}=\nabla_kRg_{ij}.
$$
Using the Ricci formula \cite{A98} we get that
$$
\nabla_if_{jk}+R_{jikl}f_l=\nabla_kRg_{ij}.
$$
By contraction for $j,k$,
$$
\nabla_i\Delta f+R_{il}f_l=\nabla_iR.
$$
Then we have
$$
nR_i+R_{il}f_l=R_i.
$$
Then we have $$ -R_{il}f_l=(n-1)R_l,
$$
or write in another way
\begin{equation}\label{riccf}
-Ric(\nabla f,\cdot)=(n-1)\nabla R.
\end{equation}

Taking one more derivative we have
$$
(n-1)\Delta R=-R_{il,i}f_l-R_{il}f_{il}.
$$
Recall the contracted Bianchi identity that
$$
R_{il,i}=\frac{1}{2}R_l.
$$
Then we have
$$
(n-1)\Delta R=-\frac{1}{2}(\nabla R,\nabla f)-R(R-\rho).
$$

we have
\begin{equation}\label{scalar}
(n-1)\Delta R+\frac{1}{2}g(\nabla f,\nabla R)+R^2-\rho R=0.
\end{equation}

Using the maximum principle we can prove Theorem 1.
 Assume that $\inf_M R(x)<0$. Since $\underline{\lim}_{x\to\infty} R(x)\geq
0$, we know that there is some point $z\in M$ such that $R(z)=\inf_M
R(x)<0$. Then we have $$ \Delta R(z)\geq 0, \ \ \nabla R(z)=0.
$$
By this we have at $z$ that
$$
(n-1)\Delta R+\frac{1}{2}g(\nabla f,\nabla R)\geq 0
$$
and by (\ref{scalar}),
$$ R(z)^2-\rho R(z)\leq 0.
$$
This is absurd since $R(z)^2-\rho R(z)>0$ for $\rho\geq 0$. The
strong maximum principle implies that either $R(x)>0$ or $R(x)=0$ on
$M$.
\end{proof}

\begin{proof} of Theorem \ref{vicente+3}

Recall the Bochner formula \cite{A98} that
\begin{equation}\label{bochner}
\frac{1}{2}\Delta |\nabla u|^2=|\nabla^2u|^2+g(\nabla u, \nabla \Delta u)+Ric(\nabla u,\nabla u).
\end{equation}
Then using the harmonicity of $u$, we have
$$
|\nabla^2u|^2+Ric(\nabla u,\nabla u)=\frac{1}{2}\Delta |\nabla u|^2.
$$
Choose a cut-off function $\phi=\phi_r$ on the ball $B_{2r}(x_0)$,
where $r>0$ (and we let $B_r=B_r(x_0)$ for simplicity) such that
$$
\phi_r=1, \ \ in \ \ B_r; \ \ |\nabla\phi_r|^2\leq \frac{C}{r^2},
$$
and
$$
\Delta \phi_r\leq \frac{C}{r^2}.
$$
These imply that
$$
\Delta \phi_r^2\leq \frac{C}{r^2}\to 0
$$
as $r\to \infty$. Then we have
$$
\int [|\nabla^2u|^2+Ric(\nabla u,\nabla u)]\phi_r^2=\int \frac{1}{2}\Delta |\nabla u|^2\phi_r^2.
$$

Using the integration by part and our assumption, we have
$$
\int \frac{1}{2}\Delta |\nabla u|^2\phi_r^2=\int \frac{1}{2}|\nabla
u|^2\Delta\phi_r^2,
$$
which is, by our assumption,
$$
\leq \int_{B_{2r}-B_r} \frac{C}{2r^2}|\nabla u|^2\to 0.
$$
as $r\to \infty$. Hence we have
$$
\int_M [|\nabla^2u|^2+Ric(\nabla u,\nabla u)]=0,
$$
which implies that $\nabla^2u=0$ and $Ric(\nabla u, \nabla u)=0$ on $M$.
\end{proof}

We now use the idea above to study the Yamabe solitons and give the

\begin{proof} of Theorem \ref{thm4}.
 By taking the trace, from the
defining equation of Yamabe soliton, we have that a Yamabe soliton satisfies
\begin{equation}\label{trace}
div X=n(R-\rho), \ \ \text{on} \ \ M.
\end{equation}
Recall the following Bochner formula \cite{Ya}
$$
div (L_Xg)(X)=\frac{1}{2}\Delta |X|^2-|\nabla X|^2+Ric(X,X)+\nabla_X
div(X).
$$
Then we have
\begin{equation}\label{bochner2}
|\nabla X|^2=\frac{1}{2}\Delta |X|^2+Ric(X,X)+(n-2)\nabla_X R.
\end{equation}
Fixing a cut-off function $\phi$ as above, we then have that
$$
\int X_j\nabla_jR\phi^2=-\int divX (R-\rho)\phi^2+2\phi\nabla_X\phi
(R-\rho).
$$
Hence
$$
\int \nabla_XR\phi^2=-n\int (R-\rho)^2\phi^2-2\int \phi \nabla_X\phi
(R-\rho).
$$

 Integrating (\ref{bochner2}) we have
$$
\int |\nabla X|^2\phi^2=\frac{1}{2}\int (\Delta\phi^2) |X|^2+\int
Ric(X,X)\phi^2+(n-2)\int\nabla_XR\phi^2.
$$
Then we obtain
$$
\int |\nabla X|^2\phi^2+n(n-2)\int (R-\rho)^2\phi^2=\frac{1}{2}\int
(\Delta\phi^2) |X|^2$$ $$+\int Ric(X,X)\phi^2-2(n-2)\int \phi
\nabla_X\phi (R-\rho).
$$
Using the Young and Cauchy-Schwartz inequalities we can get that
$$
\int |\nabla X|^2\phi^2+ (n-1)  (n-2)\int (R-\rho)^2\phi^2
$$
$$\leq
\frac{1}{2}\int (\Delta\phi^2) |X|^2+\int Ric(X,X)\phi^2+C(n)\int
|X|^2|\nabla\phi|^2
$$
for some uniform constant $C(n)$.

Then we have proved theorem \ref{thm4}.
\end{proof}

\section{proofs of theorem \ref{vicente+2} and related results}\label{sect2}

The argument of Theorem \ref{vicente+2} follows from the following proposition (see also \cite{DS}).

\begin{Pro}\label{mali} Let $(M,g)$ be a Yamabe solition with  smooth boundary. Then we have
$$
n (n-1) \int_M (R-\rho)^2  - \int_M Ric(\nabla f, \nabla f)  = (n-1) \int_{\partial M} (R-\rho) \nabla_\nu f.
$$
where $\nu$ is the outward unit normal to the boundary $\partial M$.
\end{Pro}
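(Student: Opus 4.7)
The plan is to rewrite both sides in terms of quantities involving the potential $f$ and the trace of the soliton equation, and then use a single integration by parts to connect them.

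First I would use the identity already derived in the proof of Theorem \ref{max}, namely
$$
-Ric(\nabla f, \cdot) = (n-1) \nabla R,
$$
which upon pairing with $\nabla f$ gives $Ric(\nabla f, \nabla f) = -(n-1) \, g(\nabla f, \nabla R)$. Thus the left-hand side of the proposition becomes
$$
n(n-1) \int_M (R-\rho)^2 + (n-1) \int_M g(\nabla f, \nabla R).
$$

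Next I would integrate by parts on the second integral. Since $\rho$ is constant, $\nabla R = \nabla(R-\rho)$, so
$$
\int_M g(\nabla f, \nabla R) = -\int_M (R-\rho)\, \Delta f + \int_{\partial M} (R-\rho)\, \nabla_\nu f.
$$
Taking the trace of the gradient soliton equation \eqref{solitong} yields $\Delta f = n(R-\rho)$, so
$$
\int_M g(\nabla f, \nabla R) = -n \int_M (R-\rho)^2 + \int_{\partial M} (R-\rho)\, \nabla_\nu f.
$$

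Substituting this back cancels the $n(n-1) \int_M (R-\rho)^2$ term against $-n(n-1)\int_M (R-\rho)^2$, leaving exactly the boundary term $(n-1) \int_{\partial M} (R-\rho) \nabla_\nu f$, which is what we want. There is no real obstacle: the two ingredients (the contracted Bianchi identity applied to the soliton structure, and the trace of the soliton equation) have already been established in the paper, so the proof is essentially a two-line integration by parts once the identity \eqref{riccf} is invoked.
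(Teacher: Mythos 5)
Your proof is correct, and it takes a genuinely different (and shorter) route than the paper's. The paper proves the proposition as a Reilly-type identity: it starts from $\int_M|\Delta f|^2=\int_M\Delta f\,f_{jj}$, integrates by parts, invokes the Bochner formula \eqref{bochner} to trade $\int_M g(\nabla f,\nabla\Delta f)$ for $\int_M\bigl(|\nabla^2f|^2+Ric(\nabla f,\nabla f)-\tfrac12\Delta|\nabla f|^2\bigr)$, converts the two boundary terms $\int_{\partial M}\Delta f\,\nabla_\nu f$ and $\int_{\partial M}\nabla_\nu|\nabla f|^2$ using the soliton equation, and only at the very end substitutes $\nabla^2f=(R-\rho)g$ to evaluate $|\Delta f|^2=n^2(R-\rho)^2$ and $|\nabla^2f|^2=n(R-\rho)^2$. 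You instead specialize to the soliton structure from the outset via the pointwise identity \eqref{riccf}, $-Ric(\nabla f,\cdot)=(n-1)\nabla R$, pair it with $\nabla f$, and perform one integration by parts together with the trace $\Delta f=n(R-\rho)$; the interior terms cancel exactly and only the boundary term survives. Both arguments ultimately rest on the same commutation of covariant derivatives (the paper's enters through Bochner, yours entered earlier when \eqref{riccf} was derived in the proof of Theorem \ref{max}), and \eqref{riccf} is a local identity, so invoking it on a manifold with boundary is legitimate. What the paper's version buys is the intermediate general identity $\int_M|\Delta f|^2=\int_M(|\nabla^2f|^2+Ric(\nabla f,\nabla f))+(n-1)\int_{\partial M}(R-\rho)\nabla_\nu f$, which places the result in the Reilly-formula framework of \cite{MD} and \cite{DS}; what yours buys is economy, since it avoids computing $\int_M|\nabla^2 f|^2$ and the boundary term $\int_{\partial M}\nabla_\nu|\nabla f|^2$ altogether. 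The only caveat, which applies equally to the paper, is that the statement should read \emph{gradient} Yamabe soliton, since both proofs use the potential $f$.
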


\begin{proof}
We use the argument from \cite{MD} (see also \cite{DS}. Note that
$$\int_M |\Delta f|^2=\int \Delta f f_{jj}.
$$
Integrating by parts we get that
$$
\int \Delta f f_{jj}=\int_{\partial M} \Delta f\nabla_\nu f-\int \nabla \Delta f\cdot \nabla f.
$$
Then using the Bochner formla \eqref{bochner} we have
$$
\int \Delta f f_{jj}=\int_{\partial M} {\color{red}n}\
(R-\rho)\nabla_\nu f+\int_M \left(|\nabla^2f|^2+Ric(\nabla f,\nabla
f)-\frac{1}{2}\Delta|\nabla f|^2 \right).
$$
Note that
\begin{align*}\int_M \Delta|\nabla f|^2 &= \int_{\partial M} \nabla_\nu |\nabla f|^2
= \int_{\partial M}2 \langle \nabla_\nu \nabla f, \nabla f\rangle \\
&= 2  \int_{\partial M} \nabla^2  f \langle\nu, \nabla f \rangle = 2 \int_{\partial M}(R-\rho) \langle \nu, \nabla f\rangle.
\end{align*}
Then we have
$$
\int_M |\Delta f|^2= \int_M \left( |\nabla^2f|^2 + Ric(\nabla f, \nabla f) \right) + (n-1) \int_{\partial M} { \color{red}}\  (R-\rho)\nabla_\nu f.
$$
And, using \eqref{solitong} in the above formula, we obtain
\begin{align*}
(n^2-n) \int_M (R-\rho)^2  - \int_M Ric(\nabla f, \nabla f)  = (n-1) \int_{\partial M} (R-\rho) \nabla_\nu f.
\end{align*}
\end{proof}

We now \emph{prove Theorem }\ref{vicente+2}.

\begin{proof}
By proposition \ref{mali}, we know that for the dimension constant $C_n>0$,
$$
C_n\int_{B_r}|R-\rho|^2 - \int_M Ric(\nabla f, \nabla f) = (n-1) \int_{\partial B_r}(R-\rho)\nabla_\nu f\leq
Cr\int_{\partial B_r}|R-\rho|.
$$
We now choose $r=r_j\to\infty$ such that
$$
r\int_{\partial B_r}|R-\rho|\to 0.
$$
This is obtained by using the fact that $\int_M|R-\rho|<\infty$ and
Fubini's theorem. Then, when $\int_M Ric(\nabla f, \nabla f) \le 0$, we have
$$
\int_{M}|R-\rho|^2=0,
$$
and this implies that $R=\rho$ on $M$.
\end{proof}

We take this chance to give another proof of Kazdan-Warner
condition; namely,

\begin{Pro}\label{kazdan-warner} Assume that $X$ is a conformal vector field on the compact Riemannian
manifold $(M,g)$, i.e., there exists a smooth function $a(x)$ on $M$
such that
$$
L_Xg=a(x)g.
$$
Then  we have
$$
\int_M\nabla_XRdv_g=-\frac{2n}{n-2}\int_{\partial M}
(Ric-\frac{R}{n}g)(\nu,X)d\sigma_g,
$$
where $\nu$ is the outer unit normal to the boundary $\partial M$.
\end{Pro}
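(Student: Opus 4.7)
The plan is to rewrite $\nabla_X R$ as a pure divergence up to a term that vanishes because of the conformal Killing equation. Introduce the traceless Ricci tensor with components $E_{ij} = R_{ij} - \frac{R}{n}g_{ij}$. The key input is the contracted second Bianchi identity $\nabla^i R_{ij} = \frac{1}{2}\nabla_j R$, which gives
$$
\nabla^i E_{ij} = \frac{1}{2}\nabla_j R - \frac{1}{n}\nabla_j R = \frac{n-2}{2n}\,\nabla_j R.
$$
Thus $\nabla_X R$ is, up to a dimensional constant, the divergence of $E$ paired with $X$, and the rest of the argument just unpacks this observation.

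Next expand
$$
\nabla^i(E_{ij}X^j) = (\nabla^i E_{ij})X^j + E_{ij}\nabla^i X^j.
$$
The second term vanishes: since $E_{ij}$ is symmetric, it pairs only with the symmetric part of $\nabla^i X^j$, which by the conformal Killing equation $L_X g = a(x)\,g$ equals $\frac{a}{2}g^{ij}$, and this is then killed by the traceless $E$. Combining the two displays yields the pointwise identity
$$
X^j \nabla_j R = \frac{2n}{n-2}\,\nabla^i(E_{ij}X^j).
$$

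To conclude, integrate over $M$ and apply the divergence theorem with $\nu$ the outward unit normal to $\partial M$. The boundary integrand is $E_{ij}\nu^i X^j = (Ric - \frac{R}{n}g)(\nu,X)$, which reproduces the right-hand side of the proposition up to the overall sign convention for $\nu$. The calculation has no analytic subtleties because $M$ is compact and everything is smooth; the only points to watch are (i) the orientation of the boundary normal, which fixes the sign in front of the boundary integral, and (ii) the vanishing of $E_{ij}\nabla^i X^j$, which is precisely where the conformal Killing hypothesis enters and which constitutes the heart of the Kazdan--Warner obstruction. Thus the main (and essentially only) step is the algebraic observation that a symmetric traceless tensor annihilates the deformation tensor of a conformal field.
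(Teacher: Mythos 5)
Your proof is correct and is essentially the paper's own argument: both rest on the traceless Ricci tensor, the contracted Bianchi identity $\nabla^i E_{ij}=\frac{n-2}{2n}\nabla_j R$, and the vanishing of $(E,\nabla X)$ because a symmetric traceless tensor annihilates the deformation tensor $\frac{a}{2}g$ of a conformal field, followed by the divergence theorem. The sign discrepancy you flag is real --- a direct computation with the outward normal gives $+\frac{2n}{n-2}$ on the boundary term, and the paper's minus sign comes from mixing the codifferential and divergence conventions in its integration by parts --- but this does not affect the substance of either argument.
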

\emph{Proof}. Set
$$
\dot{Ric}=Ric-\frac{R}{n}g.
$$
Then by the contracted Bianchi identity we get
$$
\delta\dot{Ric}=-\frac{n-2}{2n}dR.
$$
We now compute
$$
\int_M\nabla_XRdv_g=-\frac{2n}{n-2}\int_M \delta\dot {Ric}(X)dv_g.
$$
Integrating by part we get that
$$
\int_M \delta\dot {Ric}(X)dv_g=-\int_M (\dot {Ric},\nabla X)dv_g
+\int_{\partial M} \dot {Ric}(\nu,X)d\sigma_g.
$$

We then have
$$
\int_M \delta\dot {Ric}(X)dv_g=\int_{\partial M} \dot
{Ric}(\nu,X)d\sigma_g-\frac{1}{2}\int_M (\dot {Ric},L_Xg)dv_g.
$$
Recall that
$$
\frac{1}{2}L_Xg=\frac{1}{2} a(x)g.
$$
Since $(\dot {Ric},g)=0$, we obtain that
$$
\int_M \delta\dot {Ric}(X)dv_g=\int_{\partial M} \dot
{Ric}(\nu,X)d\sigma_g.
$$
This completes the proof of proposition \ref{kazdan-warner}.

\section{Proof of Theorem \ref{vicente+1}} \label{sect3}

The proof of Theorem \ref{vicente+1} will follow the argument of
pseudo-locality theorem due to Perelman \cite{P02}. The idea of
proof of Theorem \ref{vicente+1} is similar to Perelman's Li-Yau
harnack differential inequality. To make it, we recall some
well-known facts.

Define $d(x)=d(x,x_0)$. Let $\gamma(s)$( $\gamma:[0,\d(x)]\to M$) be
a shortest geodesic curve from $x_0$ to $x$. Without loss of
generality, we may assume that the distance function $d(x)$ is
smooth at $x$. Choose an orthonormal basis $(e_1, e_2,...,e_n)$ at
$x_0$ with $e_1=\gamma'(0)$. Extend the basis into a parallel basis
$(e_1(\gamma(s)), e_2(\gamma(s)),...,e_n(\gamma(s)))$ along the
curve $\gamma(s)$. Let $X_j(s)$ be the Jacobian vector field along
$\gamma(s)$ with $X_j(0)=0$ and $X_j(d(x))=e_j(d(x))$. Then we have
$$
\Delta d(x) =  \sum_j \int_0^{d(x)}(|X_j'(s)|^2 - R(\gamma',X_j,\gamma',X_j))ds.
$$
Fix some $r_0>0$ such that $|Ric|\leq (n-1)K$ on $B_{r_0}(x_0)$. Define
$$
Y_j(s)=a_j(s)e_j(s)
$$
for $j\geq 2$, where $a_j(s)$ is $\frac{s}{r_0}$ on $[0,r_0]$ and $a_j(s)=1$ on $[r_0,d(x)]$.

Using the minimizing property of the Jacobi field we have
$$
 \sum_j \int_0^{d(x)}(|X_j'(s)|^2-R(\gamma',X_j,\gamma',X_j))ds
 $$
 $$\leq \sum_j \int_0^{d(x)}(|Y_j'(s)|^2-R(\gamma',Y_j,\gamma',Y_j))ds.
$$
By direct computation \cite{P02} we have
\begin{align*}
 \sum_j &  \int_0^{d(x)}(|Y_j'(s)|^2-R(\gamma',Y_j,\gamma',Y_j))ds
\\
& = -\int_0^{d(x)}Ric(\gamma',\gamma')+\int_0^{r_0}(\frac{n-1}{r_0^2}+(1-\frac{s^2}{r_0^2})Ric(\gamma',\gamma'))ds
\end{align*}
and the latter is less than
$$
-\int_\gamma Ric(\gamma',\gamma')+(n-1)  \left(\frac{2}{3}Kr_0+ \frac1{r_0}\right).
$$
It is easy to see that
$$
g(\nabla f,\nabla d)=\nabla_{\gamma'} f(x)\leq \int_\gamma \nabla^2f(\gamma',\gamma')+|\nabla f(x_0)|.
$$
Using $$ \nabla^2f(\gamma',\gamma')=R-\rho
$$
we then have
$$
g(\nabla f,\nabla d)\leq -\rho d(x)+\int_\gamma R+|\nabla f(x_0)|.
$$
Hence, we have, for some uniform constant $C>0$,
\begin{equation}\label{star}
2 (n-1)\Delta d(x)+g(\nabla f,\nabla d)\leq-\rho d(x)+
\int_\gamma[-{\color{red} 2} \ (n-1)Ric(\gamma',\gamma')+R]+C/r_0^2.
\end{equation}
We may choose $r_0$ such that the latter is less than
$\frac{4(n-1)}{r_0^2}$.

For any fixed $A>2$ we shall consider the new function
$$
u(x)=\phi(\frac{d(x)}{Ar_0})R(x),
$$
where $\phi$ is   the cut-off function on the real line $\mathbb{R}$ defined after formula \eqref{bochner}, with $r=A r_0$. We
denote by $D=\frac{\phi^{''}}{\phi}$ and $h=\frac{\phi'}{\phi}$.

We compute
$$
\Delta u(x)=R\Delta \phi+2g(\nabla R,\nabla\phi)+\phi\Delta R.
$$

Note that $u=0$ outside the ball  of radius $2 A r_0$.

It is clear that if  $\inf_{M}u = 0$ for every $A$, then we have $R\geq 0$ on $M$.

If $\inf_{M}u< 0$ for some   $A=A_0$, then $\inf_{M}u< 0$ for every
$A>A_0$, and there is some point  $x_1\in B_{2 A r_0}(x_0)$  such
that
$$
u(x_1)=\inf_{M}u<0.
$$
Then we have $R(x_1)<0$. By this we have
$$
\phi'(x_1)R(x_1)>0,
$$
 which implies $x_1\notin  B_{A r_0}(x_0)$.  Moreover, at the minimum $x_1$,
\begin{equation}\label{comin}
\nabla u=0, \ \ \Delta u\geq 0
\end{equation}

The following
differential inequality by now is more or less  a standard computation
(see \cite{P02}),  but we shall give the details for the convenience of the reader. Using these two
properties \eqref{comin} and the equations (\ref{scalar}) and (\ref{star}) we can
get that
\begin{eqnarray*}
 \Delta u(x_1)&=&\left(\frac{D}{(Ar_0)^2}+\frac{h}{Ar_0}\Delta d\right)u(x_1)  + \frac1{2 (n-1)}   \frac{h}{Ar_0}(\nabla f,\nabla d)u(x_1)  \\
 &+&  \frac1{n-1} \rho\  u(x_1)-\phi R^2-2h^2\frac{1}{(Ar_0)^2}u(x_1)\\
&\leq& \left( \frac{D}{(Ar_0)^2}-\frac{ 2  h^2}{(Ar_0)^2}\right)u(x_1) - \frac1{n-1}   \phi R^2\\
&+& \frac{h}{Ar_0}[\Delta d  + \frac1{2 (n-1)}  \langle \nabla f,\nabla d\rangle]u(x_1)\\
&\leq & \left(
\frac{D}{(Ar_0)^2}-\frac{ 2 h^2}{(Ar_0)^2}\right)u(x_1)- \frac{1}{(n-1)\phi}  u(x_1)^2+\frac{2 \ h}{(Ar_0)^2}  u(x_1).
\end{eqnarray*}

Then $$ \Delta u(x_1)\leq
\frac{|u(x_1)|}{\phi} \left\{\frac{1}{A^2 r_0^2} \left[\frac{ 2 \phi'^2}{\phi}+ 2 |\phi'| + |\phi^{''}| \right]- \frac1{n-1} |u(x_1)| \right\}.
$$
 For some uniform constant $C>0$, we have
$$
 2 |\phi'|\leq C,\ \  \frac{2 \phi'^2}{\phi}\leq C, \ \ |\phi^{''}|\leq
C,
$$
Then we can show that
$$
|u(x_1)|\leq   \frac{(n-1)\ C}{A^2 r_0^2}.
$$
The latter implies that
$$
R(x)\geq  -  \frac{(n-1)\ C}{A^2 r_0^2} \  \ \ on \ \  B_{{2}Ar_0}(x_0) .
$$

Sending $A\to \infty$, we get that $R\geq 0$ on $M$.

This completes the proof of Theorem \ref{vicente+1}.

\emph{Acknowledgement.} Part of this work was done when the first
named author was visiting Valencia University and he would like to
thank the hospitality of its Department of Geometry and Topology.


\begin{thebibliography}{20}


\bibitem{AM} Yinglian An and Li Ma, \emph{The Maximum Principle and the Yamabe
Flow}, in "Partial Differential Equations and Their Applications",
World Scientific, Singapore, pp.211-224, 1999.

\bibitem{A98}
T. Aubin, \emph{Some Nonlinear Problems in Riemannian Geometry},
Springer Monogr. Math., Springer-Verlag, Berlin, 1998.


\bibitem{chow}
B.Chow, \emph{Yamabe flow on locally conformal flat
manifolds}, Comm. Pure Appl. Math., 45 (1992), pp. 1003-1014.



\bibitem{DM} X.Dai and L.Ma, \emph{Mass under Ricci flow},
Commun. Math. Phys., 274, 65-80 (2007).


\bibitem{DS}
P.Daskalopoulos, N.Sesum, \emph{The classification of locally
conformally flat Yamabe solitons}, arxiv.org, 2011.


\bibitem{H88}
R. Hamilton, \emph{The Ricci flow on surfaces, Mathematics and
general relativity}, Contemporary Math., Vol. 71, AMS, (1988),
237-262.


\bibitem{MC}
L. Ma, L.Cheng, \emph{properties of Yamabe solitons},Ann. Global. Ann. and Geom., online, 2011.


\bibitem{MD}
L.Ma, S.Du, \emph{Extension of Reilly formula with applications to eigenvalue estimates
for drifting Laplacians}, C. R. Acad. Sci. Paris, Ser. I 348 (2010) 1203-1206

\bibitem{P02} Grisha Perelman,
\emph{The entropy formula for the Ricci flow and its geometric
applications}, http://arxiv.org/abs/math/0211159v1



\bibitem{SS}
M.Struwe, H.Schwetlick, \emph{Convergence of the Yamabe flow for
'large' energies}, J. Reine Angew. Math. 562 (2003), 59-100,

\bibitem{Y94}
R. Ye, \emph{Global existence and convergence of the Yamabe flow}.
J. Differential Geom. 39 (1994), 35-50.


\bibitem{Ya}
Yano, S.Bochner, \emph{Curvature and Betti numnbers}, Princeton
Univ. Press, 1949.

\end{thebibliography}
\end{document}